\newtheorem{theorem}{Theorem}[section]
\newtheorem{lemma}[theorem]{Lemma}
\newtheorem{corollary}[theorem]{Corollary}
\newtheorem{claim}[theorem]{Claim}
\newtheorem{conjecture}[theorem]{Conjecture}
\newtheorem{problem}[theorem]{Problem}
\newtheorem{question}[theorem]{Question}
\def\beq{\begin{equation}}\def\eeq{\end{equation}}
\def\beqn{\begin{eqnarray}}\def\eeqn{\end{eqnarray}}
\newcommand{\avdeg}{\underline{\deg}}
\title{Large monochromatic components in multicolored bipartite graphs}
\author{Louis DeBiasio\thanks{Department of Mathematics, Miami University, Oxford, Ohio. \texttt{debiasld@miamioh.edu}, \texttt{kruegera@miamioh.edu}} \thanks{Research supported in part by Simons Foundation Collaboration Grant \#283194}
\and Robert A. Krueger\footnotemark[1]
\and G\'{a}bor N. S\'ark\"ozy\thanks{Alfr\'ed R\'enyi Institute of Mathematics, Hungarian Academy of Sciences, Budapest, P.O. Box 127, Budapest, Hungary, H-1364. \texttt{sarkozy.gabor@renyi.mta.hu}}
\thanks{Computer Science Department, Worcester Polytechnic Institute, Worcester, MA.} \thanks{Research supported in part by
NKFIH Grants No. K116769, K117879.}
}
\date{}
\begin{document}
 \maketitle

\begin{abstract}
It is well-known that in every $r$-coloring of the edges of the complete bipartite
graph $K_{m,n}$ there is a monochromatic connected component with at least ${m+n\over r}$ vertices.
In this paper we study an extension of this problem by replacing complete bipartite graphs by bipartite graphs of large minimum degree. We conjecture that in every $r$-coloring of the edges of
an $(X,Y)$-bipartite graph with $|X|=m$, $|Y|=n$, $\delta(X,Y) > \left( 1 - \frac{1}{r+1}\right) n$ and $\delta(Y,X) > \left( 1 - \frac{1}{r+1}\right) m$,
there exists a monochromatic component on at least $\frac{m+n}{r}$ vertices (as in the complete bipartite graph).
If true, the minimum degree condition is sharp (in that both inequalities cannot be made weak when $m$ and $n$ are divisible by $r+1$).

We prove the conjecture for $r=2$ and we prove a weaker bound for all $r\geq 3$.  As a corollary, we obtain a result about the existence of monochromatic components with at least $\frac{n}{r-1}$ vertices in $r$-colored graphs with large minimum degree.
\end{abstract}

\section{Introduction, results}

We let $V(G)$ and $E(G)$ denote the vertex-set and the edge-set of the
graph $G$, $e(G)=|E(G)|$.
$N_G(v)$ is the set of neighbours of $v$.
Hence $|N_G(v)|=\deg_G(v)$, the degree of $v$.
If $G$ is an $(X,Y)$-bipartite graph, then the minimum
degree from $X$ to $Y$ (from $Y$ to $X$) is denoted by $\delta_G(X,Y)$
($\delta_G(Y,X)$). Furthermore, the average degree from $X$ to $Y$, $\avdeg_G(X,Y)$,
is the average of the degrees in $X$, i.e. $\sum_{v\in X}\deg_G(v)/|X|$ and similarly for
$\avdeg_G(Y,X)$.
We may omit the subscript $G$ if it is clear from the context.  Given $X'\subseteq X$ and $Y'\subseteq Y$, we write $[X', Y']$ to denote the subgraph induced by $X'\cup Y'$ and $e(X', Y')$ to denote the number of edges in $[X', Y']$.  

There are many results about monochromatic connected components of edge colored graphs and hypergraphs (see the surveys \cite{GYSUR}, \cite{GYSUR2}, and \cite{KL}).  For example, the following is well-known.

\begin{theorem}[\cite{GY1}]\label{gy} In every  $r$-edge coloring of a complete graph on $n$ vertices
there is a monochromatic connected component of order at least ${n\over r-1}$.
\end{theorem}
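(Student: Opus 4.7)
My plan is to prove Theorem~\ref{gy} by establishing a stronger covering statement and then invoking pigeonhole. Specifically, I would show that in every $r$-edge-coloring of $E(K_n)$ with $r \ge 2$, the vertex set $V(K_n)$ can be covered by at most $r-1$ monochromatic connected subgraphs (of possibly distinct colors). Once this is in hand, Theorem~\ref{gy} is immediate: the $n$ vertices distributed over at most $r-1$ monochromatic components force some component to contain at least $n/(r-1)$ vertices by pigeonhole.

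I would prove the covering statement by induction on $r$. The base case $r=2$ follows from the classical fact that either a graph $G$ or its complement $\bar G$ is connected, so in any $2$-coloring of $E(K_n)$ one color class spans a connected subgraph and covers $V$ by a single monochromatic component. For the inductive step, assume the statement for $(r-1)$-colorings and consider an $r$-coloring $c$ of $E(K_n)$. I would merge two colors, say $r-1$ and $r$, into a ``super-color'' $*$, producing an $(r-1)$-coloring $c'$; by induction, $c'$ admits a cover of $V$ by at most $r-2$ monochromatic components. If none of these components is super-colored, they remain monochromatic in $c$ and we are done with an even stronger cover. Otherwise some component $M$ of the cover is super-colored, meaning the subgraph of $K[M]$ formed by the $\{r-1,r\}$-colored edges is connected. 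The plan is then to resolve $M$ into at most two components of colors $r-1$ and $r$ (separately) whose union still covers $M$; combined with the other $r-3$ components, this yields an overall cover of $V$ by at most $(r-2) - 1 + 2 = r-1$ monochromatic components as required.

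\textbf{Main obstacle.} The crux is the resolution step inside a super-color component $M$: covering $M$ by at most two monochromatic components of colors $r-1$ and $r$. The $\{r-1,r\}$-subgraph of $K[M]$ is connected but not complete, so the $r=2$ base case does not apply directly; indeed, an arbitrary connected $2$-edge-colored graph may have many small monochromatic components. The argument must exploit the extra structure of $K[M]$, namely that every ``missing'' $\{r-1,r\}$-edge inside $M$ is colored by one of $1,\ldots,r-2$, together with the flexibility to choose monochromatic components that extend beyond $M$ in the full graph $K_n$. Carrying out this delicate combinatorial analysis, and ensuring the resolution never exceeds the budget of $r-1$ components, is where I expect the main technical difficulty to lie, and is the step on which Gyárfás's original proof turns.
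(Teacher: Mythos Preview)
The paper does not prove Theorem~\ref{gy}; it is quoted from~\cite{GY1} as background, so there is no in-paper argument to compare your proposal against. That said, your route has genuine gaps beyond the one you flag.

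First, the cover produced by the inductive hypothesis may contain \emph{several} super-coloured components, not just one. If $k\ge 2$ of the at most $r-2$ pieces are super-coloured and each must be resolved into two, you end with up to $(r-2)+k\ge r$ pieces, exceeding the budget; you give no mechanism for choosing the inductive cover so that $k\le 1$. Second, even for a single super-coloured $M$ the resolution can fail outright. Take $r=3$, $V=\{1,\dots,6\}$, let colour~$2$ be the matching $\{12,34,56\}$, colour~$3$ the matching $\{23,45,61\}$, and colour~$1$ the remaining nine edges of $K_6$. Merging colours $2$ and $3$ gives a spanning $6$-cycle; if the inductive step hands you this single super-component, you must cover six vertices by two components drawn from colours $2$ and $3$, yet every such component has exactly two vertices. (Here a different inductive choice---the connected colour-$1$ graph---succeeds, but nothing in your argument forces that choice.) More fundamentally, the target statement ``$V(K_n)$ can always be covered by $r-1$ monochromatic components'' is essentially the intersecting case of Ryser's conjecture for $r$-partite hypergraphs, which is open for $r\ge 6$; so you are aiming at something strictly harder than Theorem~\ref{gy}.

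For contrast, Gy\'arf\'as's actual argument is a one-line reduction to the bipartite statement recorded here as Theorem~\ref{gy1}: if some colour spans $K_n$ we are done; otherwise pick any colour-$1$ component $A$ and set $B=V\setminus A$. The complete bipartite graph between $A$ and $B$ carries only colours $2,\dots,r$, and Theorem~\ref{gy1} applied with $r-1$ colours yields a monochromatic component of order at least $(|A|+|B|)/(r-1)=n/(r-1)$.
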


In this paper connected components of a graph are just called {\em components} and in edge-colored graphs {\em monochromatic components} are the components of the graph defined by the edges of the same color. 

Recently there has been significant interest in extending Theorem \ref{gy} to non-complete host graphs 
(e.g. 
\cite{BD}, 
\cite{DP}, 
\cite{GYS}, 
\cite{GYS2}, 
\cite{S}).
In particular, in \cite{GYS2} the authors studied the extension of Theorem \ref{gy} to $r$-edge colored graphs of large minimum degree.

In the case where the host graph is a complete bipartite graph (see \cite[Section 3.1]{GYSUR}) the following result provides an analogue of Theorem \ref{gy}.

\begin{theorem}[\cite{GY1}]\label{gy1} In every $r$-edge coloring of the edges of $K_{m,n}$ there is a monochromatic component with at least ${m+n\over r}$ vertices.
\end{theorem}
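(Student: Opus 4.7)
The plan is to argue by contradiction via a double-counting inequality together with a tight constrained-optimization estimate that uses both $\sum_C|C\cap X|=m$ and $\sum_C|C\cap Y|=n$. Assume every monochromatic component has fewer than $(m+n)/r$ vertices, and let $s<(m+n)/r$ denote the largest monochromatic component size across all $r$ colors. For each color $i$ and each color-$i$ component $C$, set $a_C=|C\cap X|$ and $b_C=|C\cap Y|$; the color-$i$ components partition $V$, so $\sum_C a_C=m$, $\sum_C b_C=n$, and $a_C+b_C\le s$.

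The lower bound is immediate: every edge of $K_{m,n}$ lies inside the monochromatic component of its own color, and a color-$i$ component $C$ contains at most $a_Cb_C$ color-$i$ edges, hence
\[
mn \;\le\; \sum_{i=1}^{r}\sum_C a_Cb_C.
\]

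The crux of the argument is the per-color upper bound $\sum_C a_Cb_C\le smn/(m+n)$. I plan to prove this by a smoothing/Lagrange-multiplier argument. Writing $a_C=f_Ct_C$ and $b_C=(1-f_C)t_C$ with $t_C=a_C+b_C\in[0,s]$ and $f_C\in[0,1]$, one has $\sum_C a_Cb_C=\sum_C f_C(1-f_C)t_C^2$, and optimizing over $\{f_C,t_C\}$ subject to the constraints shows the supremum is attained at the \emph{balanced saturated} configuration $t_C=s$ with common ratio $a_C:b_C=m:n$, giving $k=(m+n)/s$ equal components that each contribute $(ms/(m+n))(ns/(m+n))$, for a total of $smn/(m+n)$. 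Equivalently, once reduced to the saturated case ($t_C\equiv s$, $b_C=s-a_C$, $k=(m+n)/s$), the bound follows directly from Cauchy--Schwarz: $\sum_C a_Cb_C = sm-\sum_C a_C^2 \le sm-m^2/k = smn/(m+n)$.

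Combining both bounds yields
\[
mn \;\le\; \sum_{i=1}^{r}\sum_C a_Cb_C \;\le\; \frac{rs\,mn}{m+n} \;<\; mn,
\]
where the final strict inequality uses $s<(m+n)/r$, a contradiction. The main obstacle is proving the per-color upper bound: the naive estimate $a_Cb_C\le(a_C+b_C)^2/4$ only yields $\sum_C a_Cb_C\le s(m+n)/4$, which closes the argument only when $m=n$, so the refined bound $smn/(m+n)$---which is sharp at the balanced extremal coloring of $K_{m,n}$ and genuinely uses the two one-sided constraints separately---is where the real content of the argument lies.
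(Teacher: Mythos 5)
Your framing (double-count edges through monochromatic components, then combine the trivial lower bound $mn\le\sum_i\sum_C a_Cb_C$ with a sharp per-color upper bound) is sound and can be pushed through, but as written it contains a genuine gap: the reduction of the per-color optimization to the ``balanced saturated configuration'' is asserted, not proved. The objective $\sum_C a_Cb_C$ is not concave in $(a_C,b_C)$, the number of components is itself a variable, $k=(m+n)/s$ need not be an integer, and a smoothing argument to force $t_C\equiv s$ with common ratio $a_C:b_C=m:n$ has real content (the Lagrange conditions admit configurations mixing saturated and unsaturated components). The ``equivalently'' sentence only handles the already-saturated case, so the crux is left unproved.

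Here is a clean way to close the gap without optimization. For every component with $a_C+b_C>0$, the constraint $a_C+b_C\le s$ gives $a_Cb_C\le s\cdot\frac{a_Cb_C}{a_C+b_C}$. Writing $\frac{a_Cb_C}{a_C+b_C}=a_C-\frac{a_C^2}{a_C+b_C}$ and applying Cauchy--Schwarz in Engel form, $\sum_C\frac{a_C^2}{a_C+b_C}\ge\frac{(\sum_C a_C)^2}{\sum_C(a_C+b_C)}=\frac{m^2}{m+n}$, hence $\sum_C\frac{a_Cb_C}{a_C+b_C}\le m-\frac{m^2}{m+n}=\frac{mn}{m+n}$, which yields exactly your target $\sum_C a_Cb_C\le\frac{smn}{m+n}$. (Equivalently, $(a,b)\mapsto\frac{ab}{a+b}$ is concave and $1$-homogeneous, hence superadditive.) With that lemma, your chain $mn\le\sum_i\sum_C a_Cb_C\le r\cdot\frac{smn}{m+n}<mn$ is airtight. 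Note also that your route differs from what the paper relies on: the paper cites the theorem from Gy\'arf\'as and, via Lemma~\ref{andraslemma2} of Mubayi and Liu--Morris--Prince, obtains something stronger --- a monochromatic \emph{double star} of order $\ge\frac{m+n}{r}$ --- by taking the majority color class and applying Cauchy--Schwarz to its degree sequence. Your argument works across all $r$ colors simultaneously and produces only a component, not a double star.
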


Mubayi \cite{MU} and Liu, Morris, and Prince \cite{LMP} independently obtained a stronger result with a clever application of the Cauchy-Schwarz inequality: one can require that the monochromatic component in Lemma \ref{gy1} is a {\em double star} (a tree obtained by joining the centers of two disjoint stars by an edge).

Here we address the natural combination of the above two problems and we study the largest monochromatic component in an $r$-edge colored bipartite graph of large minimum degree.

\begin{question}\label{que} What minimum degree condition will guarantee that an $(X,Y)$-bipartite graph $G$ with $|X|=m$, $|Y|=n$,  has the property that in any $r$-edge coloring of $G$ there is a monochromatic component with at least ${m+n\over r}$ vertices (as in the complete bipartite graph)?
\end{question}

First we provide an answer for $r=2$.

\begin{theorem}\label{r=2}
Let $G$ be an $(X,Y)$-bipartite graph with $|X|=m$, $|Y|=n$. If $\delta(X,Y) > \frac{2}{3} n$ and $\delta(Y,X) > \frac{2}{3} m$,
then in every 2-edge coloring of $G$, there exists a monochromatic component on at least $\frac{m+n}{2}$ vertices.
\end{theorem}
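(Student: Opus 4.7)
The plan is to argue by contradiction: assume that every monochromatic component has fewer than $(m+n)/2$ vertices. I would pick a largest monochromatic component and, by the symmetry between the two colors, assume it is red; write it as $R = A \cup B$ with $A \subseteq X$ and $B \subseteq Y$, so $|A|+|B|<(m+n)/2$. By the maximality of $R$, every edge of $G$ from $A$ to $Y\setminus B$ (and from $B$ to $X\setminus A$) must be blue. The minimum-degree hypothesis then forces strong lower bounds on the blue degrees across both cuts: each $a \in A$ has more than $2n/3-|B|$ blue neighbors in $Y\setminus B$, and each $y \in Y\setminus B$ has more than $|A|-m/3$ blue neighbors in $A$ (whenever $|A|>m/3$), with symmetric statements on the other cut.

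In the \emph{unbalanced case}, $|A|\geq 2m/3$ (the case $|B|\geq 2n/3$ is symmetric). Combining this with $|A|+|B|<(m+n)/2$ gives $|B|<n/2-m/6$, so each $a \in A$ has at least $(m+n)/6$ blue neighbors in $Y\setminus B$, and each $y \in Y\setminus B$ has at least $m/3$ blue neighbors in $A$. I would show that the blue bipartite graph on $[A,\,Y\setminus B]$ has at most two connected components, since each must contain more than $m/3$ vertices of $A$. If there is only one component, the blue component containing $A \cup (Y\setminus B)$ has size at least $|A|+|Y\setminus B|\geq 5m/6+n/2>(m+n)/2$, a contradiction; if there are two, I would merge them via $B$ using the analogous blue edges from $B$ to $X\setminus A$ controlled by $\delta(Y,X)>2m/3$, yielding a blue component of size exceeding $(m+n)/2$.

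The main obstacle is the \emph{balanced case} $|A|<2m/3$ and $|B|<2n/3$: here no single monochromatic component can be extended to the required size by the neighbor-counting above, so one must work with the joint red-blue cell decomposition $V_{ij} = R_i \cap B^j$, where $R_i$ ranges over red components and $B^j$ over blue components, and $A_i, B_i$ (resp.\ $X^j, Y^j$) denote their $X$- and $Y$-parts. For any $v \in X \cap V_{ij}$, every edge at $v$ is either red (into $B_i$) or blue (into $Y^j$), giving $N(v)\subseteq B_i\cup Y^j$ and hence $|B_i|+|Y^j|>2n/3$; analogously, $|A_i|+|X^j|>2m/3$ whenever $V_{ij}^Y\neq\emptyset$. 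Selecting a cell $(i,j)$ with both $V_{ij}^X$ and $V_{ij}^Y$ non-empty yields $|R_i|+|B^j|>2(m+n)/3$, and the hard part is then to rule out the "diagonal" extremal configurations (such as the $3$-partite tightness construction, in which no cell contains vertices from both $X$ and $Y$). I expect this ruling-out to be the technically delicate step, to be carried out by using the strict inequalities $\delta(X,Y)>2n/3$ and $\delta(Y,X)>2m/3$ to force one of $|R_i|$ or $|B^j|$ past the $(m+n)/2$ threshold.
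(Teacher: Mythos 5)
Your unbalanced case is essentially the paper's Claim~2.1 and, modulo some sketchiness in the two-component merging step, can be made to work along the lines you indicate. The genuine gap is the balanced case, where $|A|<2m/3$ and $|B|<2n/3$ --- and this is precisely where the paper's proof does almost all of its work. You correctly set up the cell decomposition $V_{ij}=R_i\cap B^j$ and correctly derive $|B_i|+|Y^j|>2n/3$ when $V_{ij}\cap X\neq\emptyset$ and $|A_i|+|X^j|>2m/3$ when $V_{ij}\cap Y\neq\emptyset$, but combining these only yields $|R_i|+|B^j|>\tfrac{2}{3}(m+n)$ for a ``mixed'' cell, which does \emph{not} force either $|R_i|$ or $|B^j|$ past $(m+n)/2$, and moreover such a mixed cell need not exist (in the $K_{3,3}$-minus-a-matching blow-up, every cell meets only one side). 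You identify this obstacle yourself and then defer it (``I expect this ruling-out to be the technically delicate step''), so the argument is not complete.

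For comparison, the paper handles this case not through the cell decomposition but by a different device: it first observes that $x_1>m/3$ and $y_1>n/3$ for the largest (say blue) component $H_1$, then looks at the red components that cover the ``cross'' sets $X_1\cup Y_2$ (there are $k$ of them) and $X_2\cup Y_1$ (there are $\ell$), and case-splits on $k,\ell\in\{1,2,\geq3\}$. The cases $k=1$, $\ell=1$, $k=2$, $\ell=2$ are disposed of by structural arguments showing two monochromatic components cover $V(G)$, and the remaining case $k,\ell\geq 3$ is killed by a degree-sum count giving $x_1+y_2<\tfrac{k}{3(k-1)}(m+n)$ and $x_2+y_1<\tfrac{\ell}{3(\ell-1)}(m+n)$, which contradict $m+n=x_1+y_2+x_2+y_1$ when $k,\ell\geq3$. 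To complete your approach you would need an argument of comparable force; the bound $|R_i|+|B^j|>\tfrac{2}{3}(m+n)$ on mixed cells is too weak on its own, and the non-existence of mixed cells is exactly the configuration you must (and currently do not) exclude.
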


We make the following conjecture for general $r$.

\begin{conjecture}\label{r}
Let $r\geq 3$ and let $G$ be an $(X,Y)$-bipartite graph with $|X|=m$, $|Y|=n$. If $\delta(X,Y) > \left( 1 - \frac{1}{r+1}\right) n$ and $\delta(Y,X) > \left( 1 - \frac{1}{r+1}\right) m$,
then in every $r$-edge coloring of $G$, there exists a monochromatic component on at least $\frac{m+n}{r}$ vertices.
\end{conjecture}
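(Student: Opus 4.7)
I would proceed by contradiction, supposing an $r$-edge-coloring of $G$ in which every monochromatic component has fewer than $(m+n)/r$ vertices. For each color $i \in [r]$, let $\mathcal{C}_i$ denote the set of color-$i$ components (treating vertices incident to no color-$i$ edge as singletons) and, for $v \in X \cup Y$, let $C_i(v) \in \mathcal{C}_i$ be the component containing $v$. The key observation is that $N_i(x) \subseteq C_i(x) \cap Y$ for every $x \in X$, so $|N(x)| \leq \sum_{i=1}^{r} |C_i(x) \cap Y|$; summing over $X$ gives the master inequality
\beq
\tfrac{r}{r+1}\,mn \ < \ e(G) \ \leq \ \sum_{i=1}^{r} \sum_{C \in \mathcal{C}_i} |C \cap X|\,|C \cap Y|.
\eeq

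A first attempt is to combine this with AM-GM ($|C \cap X|\,|C \cap Y| \leq |C|^2/4$), the identity $\sum_{C \in \mathcal{C}_i} |C| = m+n$, and the assumed bound $|C| < (m+n)/r$, which yields $\max_C |C| > \tfrac{4mn}{(r+1)(m+n)}$. When $m=n$ this gives only $(m+n)/(r+1)$, short of the target by a factor of $(r+1)/r$; this is presumably close to the weaker bound referenced in the abstract. To upgrade to the sharp $(m+n)/r$ I would adapt the double-star strengthening of Mubayi \cite{MU} and Liu--Morris--Prince \cite{LMP}: for any color-$i$ edge $xy$, the component $C_i(x) = C_i(y)$ contains $N_i(x) \cup N_i(y) \cup \{x,y\}$, and so
\beq
|C_i(x)| \ \geq \ |N_i(x)| + |N_i(y)| - |N_i(x) \cap N_i(y)| + 2.
\eeq
A Cauchy--Schwarz argument on the color-$i$ degree sequences over $X$ and $Y$, analogous to \cite{LMP}, should then produce a color $i$ and an edge $xy$ whose double star spans at least $(m+n)/r$ vertices, at least in the $G = K_{m,n}$ limit.

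The main obstacle will be adapting the Cauchy--Schwarz step to non-complete host graphs. In $K_{m,n}$ every pair $(x,y)$ contributes to some color and one can aggregate cleanly over all pairs; here each vertex misses up to $n/(r+1)$ (or $m/(r+1)$) neighbors, and these missing pairs may cluster in ways that deflate the double-star estimate. The sharpness example --- the blow-up of $K_{r+1,r+1}$ minus a perfect matching, with each color chosen as one of the $r$ perfect matchings in a proper edge-coloring of the remaining $r$-regular bipartite graph --- shows that the threshold $1 - 1/(r+1)$ is exactly where the argument should begin to fail, so the missing-edge deficit must be absorbable by the slack in the double-star bound. I expect a successful proof to split into cases based on whether some color has a single dominant component (handled directly using the minimum degree condition) or whether all color components are small and roughly balanced (forcing a rich double-star structure via Cauchy--Schwarz). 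A reduction to the complete bipartite case by adding and recoloring edges seems unlikely to succeed, since the precise location of the missing edges is crucial in the extremal configurations.
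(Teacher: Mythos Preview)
The statement you are attempting is Conjecture~\ref{r}, which the paper explicitly leaves open: immediately after stating it the authors write that they ``are only able to prove the following weaker version,'' namely Theorem~\ref{tetel}, which requires the much stronger degree condition $\delta > \bigl(1 - (m/n)^3/(128r^5)\bigr)\cdot n$ (resp.\ $m$). So there is no proof in the paper for you to be compared against; any complete argument you produced would be new.

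Your write-up is not a proof but a plan, and you say so yourself (``should then produce\dots'', ``The main obstacle will be\dots''). The concrete gap is exactly the one you name: the Cauchy--Schwarz/double-star count that works for $K_{m,n}$ does not obviously survive the passage to a host graph missing a $\tfrac{1}{r+1}$-fraction of the edges at each vertex. The paper's proof of the weaker Theorem~\ref{tetel} follows essentially the strategy you outline --- a stability version of the Mubayi/Liu--Morris--Prince double-star lemma (Lemma~\ref{stabilitylemma}) plus a structural lemma (Lemma~\ref{main}) forcing each color class into $r$ nearly balanced main components --- but to make the error terms absorbable it needs the much smaller missing-degree $\gamma = (m/n)^3/(128r^5)$, not $1/(r+1)$. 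Nothing in your sketch indicates how to close that quantitative gap, and the paper gives no hint that the authors know how to either. Your ``first attempt'' bound $\max_C |C| > 4mn/((r+1)(m+n))$ is indeed weaker than what the paper achieves, and your proposed upgrade via double stars is precisely the route the paper takes to get Theorem~\ref{tetel}; pushing it all the way to the conjectured threshold remains open.
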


The bounds in Conjecture \ref{r} cannot be improved when $m$ and $n$ are divisible by $(r+1)$.  Consider a 1-factorization of the complete bipartite graph
$K_{(r+1),(r+1)}$ with partite sets $X'$ and $Y'$ where the edges coming from the $i$-th matching, $1\leq i \leq r$, are colored with color $i$ and the edges coming from the
$(r+1)$-st matching are removed.  Replace each vertex in $X'$ with a set of $t_1$ points (resulting in $X$), each vertex in $Y'$ with a set of $t_2$ points (resulting in $Y$), and each edge of color $i$ with a complete bipartite graph $K_{t_1,t_2}$ of color $i$.
The bipartite graph obtained has $m+n=(r+1)t_1 + (r+1) t_2$ vertices, it has minimum degrees
$\delta(X,Y) = \left( 1 - \frac{1}{r+1}\right) n$ and $\delta(Y,X) = \left( 1 - \frac{1}{r+1}\right) m$,
yet the largest monochromatic component has size only ${m+n\over r+1}$.  Thus if Conjecture \ref{r} is true, there is a jump in the size of the largest monochromatic component just below the given minimum degree threshold.  Also note that it may be possible to slightly refine Conjecture \ref{r} by only requiring $\delta(X,Y) \geq \left( 1 - \frac{1}{r+1}\right) n$ and $\delta(Y,X) \geq \left( 1 - \frac{1}{r+1}\right) m$, provided equality does not hold in both.  

At the moment, we are only able to prove the following weaker version of Conjecture \ref{r}.

\begin{theorem}\label{tetel}
Let $r$ be an integer with $r\geq 2$ and let $m$ and $n$ be integers.  If $G$ is an $X,Y$-bipartite graph with $m=|X|\leq |Y|=n$, $\delta(X,Y) > \left( 1 - \frac{(m/n)^3}{128r^5}\right) n$, and $\delta(Y,X) > \left( 1 - \frac{(m/n)^3}{128r^5}\right) m$,
then in every $r$-edge coloring of $G$, there exists a monochromatic component on at least $\frac{m+n}{r}$ vertices.
\end{theorem}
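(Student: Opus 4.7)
The plan is to adapt the Cauchy--Schwarz ``double star'' argument of Mubayi and Liu--Morris--Prince (which gives Theorem~\ref{gy1} for the complete bipartite host) to our near-complete setting, and then close the resulting slack by using the minimum-degree hypothesis structurally.

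For each color $c$ and each color-$c$ edge $uv$ with $u\in X$ and $v\in Y$, the double star centered at $uv$ has exactly $d_c(u)+d_c(v)$ vertices and lies inside a single monochromatic $c$-component. Using the identity
\[
\sum_{c=1}^{r}\sum_{uv\in E_c}\bigl(d_c(u)+d_c(v)\bigr)
=\sum_{c=1}^{r}\Bigl(\sum_{x\in X}d_c(x)^2+\sum_{y\in Y}d_c(y)^2\Bigr),
\]
and applying Cauchy--Schwarz once within each color (to each side of the bipartition) and once across colors, together with the bound $e(G)\ge(1-\eps)mn$ coming from the minimum-degree hypothesis, I would show that the average double-star size is at least
\[
\frac{e(G)(m+n)}{rmn}\ \ge\ (1-\eps)\cdot\frac{m+n}{r}.
\]
Hence some monochromatic component already has at least $(1-O(\eps))(m+n)/r$ vertices.

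To push this lower bound from $(1-O(\eps))(m+n)/r$ up to the claimed $(m+n)/r$, I would exploit the minimum-degree hypothesis structurally rather than just as an edge-count: any vertex $w$ not lying in the near-maximal component $K$ of color $c$ has nearly its entire opposite side as neighbors, so by pigeonhole over the $r$ colors, either many of the edges at $w$ of color $c$ reach into $K$ (forcing $w\in K$ and contradicting that $K$ is a full $c$-component), or $w$ seeds a large monochromatic component in another color that can be grown to size at least $(m+n)/r$. Iterating this absorption step and carefully bookkeeping loses a factor of $r$ at each Cauchy--Schwarz application and a factor of $n/m$ whenever $X$ and $Y$ must be treated asymmetrically, which is what produces the $r^5$ in the denominator and the $(m/n)^3$ in the numerator of the allowed $\eps$.

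The main obstacle is exactly this gap-closing step. The pure Cauchy--Schwarz inequality delivers only $(1-O(\eps))(m+n)/r$, and in the interesting regime $n\gg r^6$ we have $\eps(m+n)/r\gg 1$, so rounding is not an option. Here the strictly-stronger-than-edge-count hypothesis $\delta(X,Y)>(1-\eps)n$ and $\delta(Y,X)>(1-\eps)m$ becomes essential: the per-vertex guarantee enables the extension/absorption argument above, whereas an assumption of $e(G)\ge(1-\eps)mn$ alone would be insufficient (since all ``missing'' edges could pile up at a few bad vertices, leaving no leverage to enlarge $K$).
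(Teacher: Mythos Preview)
Your outline correctly identifies the two pillars --- the Cauchy--Schwarz double-star count already gives a monochromatic component of size $(1-O(\eps))(m+n)/r$, and closing the remaining gap genuinely requires the per-vertex minimum-degree hypothesis rather than just the edge count --- but the gap-closing mechanism you describe does not work as stated, and the paper's actual argument is structurally different. First, a logical slip: if $w\notin K$ and $K$ is a color-$c$ component, then $w$ has \emph{no} $c$-edges into $K$ by definition, so your dichotomy ``either many $c$-edges of $w$ reach into $K$, or \dots'' has an empty first branch. What you presumably mean is that $w$ has many neighbors in $K$ (since $K$ is large and $\deg(w)$ is large), all carrying colors $\neq c$, so some color $c'\neq c$ carries at least $|K\cap(\text{opposite side})|/(r-1)$ of them. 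But here is the real problem: nothing you have said controls the $c'$-structure, so there is no reason this $c'$-star at $w$ sits inside, or can be grown into, a $c'$-component of size $(m+n)/r$. ``Iterating absorption'' does not help without such control.

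The paper supplies that control via a \emph{stability} version of the double-star lemma (Lemma~\ref{stabilitylemma}), proved with the \emph{defect form} of Cauchy--Schwarz: in each color $i$, either a double star already reaches $(m+n)/r$, or all but $O(\alpha_i^{1/3}m)$ vertices of $X$ and $O(\beta_i^{1/3}n)$ vertices of $Y$ have color-$i$ degree within $O(\alpha_i^{1/3}n)$ (resp.\ $O(\beta_i^{1/3}m)$) of the average. Applying this to \emph{every} color (Lemma~\ref{main}) forces each color to decompose into exactly $r$ ``main components'' with tight two-sided bounds on their intersections with $X$ and with $Y$, plus a tiny nonempty leftover $Z_i$. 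A vertex $x\in Z_1\cap X$ in the majority color then has red degree at most $\beta_1^{1/3}n$, hence some other color --- say blue --- contributes degree exceeding $(1-o(1))n/(r-1)>n/r+n/r^2$; but this strictly exceeds the \emph{upper} bound on $|C_j^{\text{blue}}\cap Y|$ coming from the structural lemma, so no blue main component can accommodate $x$'s blue neighborhood, while $x$'s high blue degree simultaneously forces it to be non-exceptional in blue --- a contradiction. The $(m/n)^3$ and $r^5$ arise from the cube roots in the defect inequality combined with the $X$/$Y$ asymmetry, not from iterated absorption. Your sketch is missing this defect-Cauchy--Schwarz stability step and the resulting two-sided component-size bounds, which are what make the final pigeonhole bite.
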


It is interesting to note that while there is a monochromatic component of the same size as in the complete bipartite graph, we can no longer guarantee that this component is a double star.  Indeed, assume that $m$ and $n$ are divisible by $r$ and consider a 1-factorization of the complete bipartite graph $K_{r,r}$ with partite sets $X'$ and $Y'$, where the edges coming from the $i$-th matching in the 1-factorization, $1\leq i \leq r$, are colored with color $i$.  Let $2\leq t_1\leq t_2$ be positive integers and replace each vertex in $X'$ with a set of $t_1$ points (resulting in $X$), each vertex in $Y'$ with a set of $t_2$ points
(resulting in $Y$), and each edge of color $i$ with a complete bipartite graph $K_{t_1,t_2}$ of color $i$ from which we remove a matching of size $t_1$.  The bipartite graph thus obtained has $m+n=r t_1 + r t_2$ vertices, it has minimum degrees
$\delta(X,Y) = n - r$ and $\delta(Y,X) \geq m - r$, and the largest monochromatic component still has size ${m+n\over r}$ as claimed; however, the largest monochromatic double star only has size at most $t_1-1+t_2={m+n\over r}-1$.


Another natural way to answer Question \ref{que} is to consider an ``additive'' minimum degree condition; that is, a lower bound on $\delta(X,Y)+\delta(Y,X)$.  We prove the following result for two colors; however, we don't believe this degree condition to be best possible.

\begin{theorem}\label{-n/8}
Let $G$ be an $X,Y$-bipartite graph on $n$ vertices with $|Y|\geq |X|> n/4$.  If $\delta(X,Y)\geq |Y|-n/8$ and $\delta(Y,X)\geq |X|-n/8$, then in every 2-coloring of the edges of $G$ there exists a monochromatic component $H$ such that $|H\cap X|\geq |X|/2$ and $|H\cap Y|\geq |Y|/2$; so in particular, $|H|\geq n/2$.
\end{theorem}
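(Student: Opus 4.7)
My plan is to argue by contradiction: assume every monochromatic component $H$ has $|H\cap X|<|X|/2$ or $|H\cap Y|<|Y|/2$. I would split the analysis on whether some monochromatic component $C$ is ``$X$-heavy'', i.e.\ $|C\cap X|\geq |X|/2$.

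In the first case, a red component $C$ (WLOG) has $X^*:=C\cap X$ with $|X^*|\geq |X|/2$, and by assumption $|Y^*|<|Y|/2$ where $Y^*:=C\cap Y$. All edges between $X^*$ and $Y\setminus Y^*$ are blue. Using the minimum degree conditions, every vertex in $X^*$ has at least $|Y\setminus Y^*|-n/8$ blue neighbors in $Y\setminus Y^*$ and every vertex in $Y\setminus Y^*$ has at least $|X^*|-n/8$ blue neighbors in $X^*$; since $|X|>n/4$ and $|Y\setminus Y^*|>|Y|/2\geq n/4$, common-neighborhood arguments force a single blue component to contain all of $X^*$ together with $Y\setminus Y^*$. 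This component would give at least $|X|/2$ of $X$ and more than $|Y|/2$ of $Y$, contradicting the assumption. The symmetric case of a blue $X$-heavy component is identical.

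In the second case, every monochromatic component has $|C\cap X|<|X|/2$. I would form the bipartite multigraph $H$ with parts $\mathcal R$ (red components) and $\mathcal B$ (blue components), placing one edge per $x\in X$ joining $R(x)$ to $B(x)$. Because no single red or blue component spans at least $|X|/2$ edges of $H$, no two-vertex subset of $\mathcal R\cup\mathcal B$ covers all $|X|$ edges; by K\"onig's theorem $H$ has a matching of size $3$, yielding vertices $x_1,x_2,x_3\in X$ in pairwise distinct red components $R_{i_1},R_{i_2},R_{i_3}$ and pairwise distinct blue components $B_{j_1},B_{j_2},B_{j_3}$.

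The heart of the proof is then a cell-counting argument. Set $Y_{i,j}=\{y\in Y:R(y)=i,\,B(y)=j\}$. For each $l$, the neighborhood $N(x_l)$ lies in the $Y$-parts of its red and blue components, so $|\{y\in Y: R(y)\neq i_l\text{ and }B(y)\neq j_l\}|\leq n/8$. Summing for $l=1,2,3$ and regrouping by cell, the coefficient of $|Y_{i,j}|$ becomes $|\{l : i\neq i_l,\,j\neq j_l\}|$, equal to $2$ on the three diagonal cells, $1$ on off-diagonal cells of the $3\times 3$ block, $2$ on cells with exactly one coordinate in the matching, and $3$ on cells with neither. Using $\sum|Y_{i,j}|=|Y|$, this reduces to an inequality of the form $|Y|+(D+E+2C)\leq 3n/8$ for nonnegative quantities $D,E,C$, giving $|Y|\leq 3n/8$, which contradicts $|Y|\geq n/2$. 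I expect the main obstacle to be Case~2: finding the matching via K\"onig and correctly bookkeeping the coefficients; once that is done, the arithmetic $n/2>3n/8$ closes the argument.
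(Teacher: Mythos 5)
Your proof is correct, and Case~2 takes a genuinely different route from the paper. The paper's argument begins by invoking Lemma~\ref{andraslemma2} to locate the largest monochromatic component $H_1$ with $|H_1|\geq 3n/8$, proves a swapping claim (if $H_1$ is heavy on one side it is heavy on both), and then runs a long case analysis on $x_1,y_1,x_2,y_2$ that threads through several $n/8$- and $n/4$-threshold subcases, eventually reaching a contradiction by producing a larger monochromatic component than $H_1$. Your Case~1 is close in spirit to the paper's swapping claim (an $X$-heavy red component that is $Y$-light forces a blue component containing $X^*\cup(Y\setminus Y^*)$). Your Case~2, however, replaces the paper's entire case analysis with a clean structural step: form the auxiliary bipartite multigraph on red/blue components with one edge per $x\in X$, observe that the hypothesis of Case~2 forces every vertex cover to have size at least $3$, and invoke K\H{o}nig to extract $x_1,x_2,x_3$ lying in pairwise distinct red and pairwise distinct blue components. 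The concluding count can be stated even more simply than your cell bookkeeping: for each $y\in Y$, the quantity $m(y):=|\{l:R(y)\neq i_l,\,B(y)\neq j_l\}|$ is at least $1$, since $R(y)$ excludes at most one $l$ and $B(y)$ excludes at most one $l$; hence $|Y|\leq\sum_y m(y)=\sum_l|\{y:R(y)\neq i_l,\,B(y)\neq j_l\}|\leq 3\cdot n/8<n/2\leq|Y|$, a contradiction. The net effect is that your argument needs neither the density lemma nor the ``largest component'' framing, and Case~2 uses only the $X\to Y$ degree condition; it is a shorter and structurally more transparent proof, while the paper's route has the minor virtue of extracting quantitative information about $H_1$ along the way.
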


\subsection{Graphs with large minimum degree}

In \cite{GYS2}, the authors conjecture that for all $r\geq 3$, if $G$ is a graph on $n$ vertices with $\delta(G)\geq (1-\frac{r-1}{r^2})n$, then in every $r$-coloring of the edges of $G$, there is a monochromatic component on at least $\frac{n}{r-1}$ vertices.  Our results for bipartite graphs have some consequences for the graph case.

We first obtain the following corollary of Theorem \ref{tetel} which improves the bound of $(1-\frac{1}{1000(r-1)^9})n$ given in \cite{GYS2}.

\begin{corollary}\label{cor}
Let $r$ be an integer with $r\geq 3$ and let $G$ be a graph on $n$ vertices.  If $\delta(G)\geq (1-\frac{1}{3072(r-1)^5})n$, then in every $r$-coloring of  the edges of $G$, there exists a monochromatic component on at least $\frac{n}{r-1}$ vertices.
\end{corollary}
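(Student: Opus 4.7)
The plan is to deduce Corollary \ref{cor} from Theorem \ref{tetel} (together with Theorem \ref{r=2} in the base case $r=3$) by bipartitioning $V(G)$ so that one color disappears from the cross-edges, then invoking the bipartite result with one fewer color.

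Suppose for contradiction that $G$ has an $r$-edge-coloring in which every monochromatic component has strictly fewer than $n/(r-1)$ vertices. The components $C_1,\dots,C_k$ of color $1$ partition $V(G)$ into parts each of size less than $n/(r-1)$. Greedily assign the $C_i$'s to two bins $X$ and $Y$, each time placing the current component into whichever bin is smaller; since each $|C_i|<n/(r-1)$, the resulting bipartition satisfies $\bigl||X|-|Y|\bigr|<n/(r-1)$. Writing $m:=\min(|X|,|Y|)$ and $n^*:=\max(|X|,|Y|)$ so that $m+n^*=n$, we obtain
\[
m \;>\; \tfrac{n-n/(r-1)}{2} \;=\; \tfrac{n(r-2)}{2(r-1)}.
\]

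Because every color-$1$ edge lies inside $X$ or inside $Y$, the bipartite graph $[X,Y]$ carries only the other $r-1$ colors. Setting $\varepsilon:=1/(3072(r-1)^5)$, for any $x\in X$ with $|X|\leq|Y|$ we get
\[
\deg_{[X,Y]}(x) \;\geq\; \delta(G)-(|X|-1) \;\geq\; (1-\varepsilon)n - m + 1 \;>\; n^* - \varepsilon n,
\]
so $\delta(X,Y)\geq n^*-\varepsilon n$, and symmetrically $\delta(Y,X)\geq m-\varepsilon n$.

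In the base case $r=3$, Theorem \ref{r=2} applied to $[X,Y]$ demands only $\delta(X,Y)>2n^*/3$ and $\delta(Y,X)>2m/3$; both follow trivially from $m>n/4$ and $\varepsilon<1/12$. For $r\geq 4$ we apply Theorem \ref{tetel} with $r-1$ colors to $[X,Y]$; its more restrictive hypothesis (since $m\leq n^*$) rearranges to
\[
\varepsilon \;<\; \frac{m^4}{128\,n\,(n^*)^3\,(r-1)^5}.
\]
Substituting the greedy bounds $m/n>(r-2)/(2(r-1))$ and $n^*/n<r/(2(r-1))$ into the right-hand side reduces the check to the purely numerical inequality $(r-1)r^3<12(r-2)^4$, which holds for every $r\geq 4$ (equality at $r=4$ is comfortably absorbed by the strictness of the greedy bound). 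Either way, the chosen bipartite theorem yields a monochromatic component of $[X,Y]$ on at least $(m+n^*)/(r-1)=n/(r-1)$ vertices, contradicting our initial assumption.

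The main obstacle is calibrating the constant $3072$ so that even the worst-case bipartition---one in which a single color-$1$ component has size nearly $n/(r-1)$, forcing $m/n$ down to roughly $(r-2)/(2(r-1))$---still satisfies the $(m/n)^3$-sensitive hypothesis of Theorem \ref{tetel} with $r-1$ colors. The bottleneck occurs precisely at $r=4$, and this is what dictates the specific numerical constant in the corollary.
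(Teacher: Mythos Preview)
Your argument is correct and follows the same overall strategy as the paper: bipartition $V(G)$ along the components of one colour class so that the cross-graph uses only $r-1$ colours, then invoke the bipartite theorem. The details differ in two places.

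First, the paper claims (without spelling out the justification) a bipartition with $|X|\geq n/3$. This is obtained either by taking a single colour-$1$ component of size in $[n/3,\,n/(r-1))$ as one side, or---when all colour-$1$ components are below $n/3$---by greedy packing, which then gives $\bigl||X|-|Y|\bigr|<n/3$. With $|X|\geq n/3$ one has $|X|/|Y|\geq 1/2$ uniformly, whence $(|X|/|Y|)^3\,|X|\geq n/24$, and the paper applies Theorem~\ref{tetel} directly for every $r\geq 3$ with no case distinction. You instead rely only on the raw greedy bound $m>n(r-2)/(2(r-1))$; for $r=3$ this yields merely $m>n/4$, which is too weak for Theorem~\ref{tetel}, so you are forced to handle $r=3$ separately via Theorem~\ref{r=2}.

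Second, for $r\geq 4$ you carry out an $r$-dependent numerical check rather than the paper's uniform one. Your reduction to $r^3(r-1)\leq 12(r-2)^4$ is valid (with equality at $r=4$ absorbed, as you note, by the strict inequality $m/n>(r-2)/(2(r-1))$), and for $r\geq 5$ your greedy bound on $m/n$ is in fact sharper than the paper's $1/3$. Both routes are sound; the paper's is more uniform and avoids the case split, while yours makes the $r$-dependence explicit and correctly identifies $r=4$ as the bottleneck dictating the constant $3072$.
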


\begin{proof}
Suppose $\delta(G)\geq (1-\frac{1}{3072(r-1)^5})n$ and consider an $r$-edge coloring of $G$.  If there exists a monochromatic component of order at least $\frac{n}{r-1}$, then we are done, so suppose not.  Then there is a bipartition $\{X,Y\}$ of $G$ such that $|Y|\geq |X|\geq n/3$ and $G[X,Y]$ is colored with at most $r-1$ colors.  We have $|X|/|Y|\geq 1/2$ and thus 
$$(|X|^3/|Y|^3)|Y|\geq (|X|^3/|Y|^3)|X|\geq n/24.$$ 
So we have
$$\delta(X,Y)\geq |Y|-\frac{n}{3072(r-1)^5}=|Y|-\frac{n/24}{128(r-1)^5}\geq \left(1-\frac{(|X|/|Y|)^3}{128(r-1)^5}\right)|Y|,$$ 
and 
$$\delta(Y,X)\geq |X|-\frac{n}{3072(r-1)^5}=|X|-\frac{n/24}{128(r-1)^5}\geq \left(1-\frac{(|X|/|Y|)^3}{128(r-1)^5}\right)|X|$$ 
so thus may apply Theorem \ref{tetel} to get a monochromatic component in $G[X,Y]$ of size at least $\frac{|X|+|Y|}{r-1}=\frac{n}{r-1}$.
\end{proof}

We also obtain the following corollary of Theorem \ref{-n/8} which improves the bound of $9n/10$ given in \cite{GYS2} (with a different method of proof).

\begin{corollary}\label{7/8}
Let $G$ be a graph with $\delta(G)\geq 7n/8$.  In every 3-coloring of the edges of $G$, there exists a monochromatic component on at least $n/2$ vertices.  
\end{corollary}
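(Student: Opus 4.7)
The plan is to reduce the three-color graph setting to the two-color bipartite setting so that Theorem \ref{-n/8} applies directly. I would argue by contradiction: assume some 3-edge-coloring of $G$ has no monochromatic component on at least $n/2$ vertices; in particular every color-3 component has fewer than $n/2$ vertices.

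The first step is to partition $V(G) = X \cup Y$ so that $X$ and $Y$ are each unions of color-3 components and $|Y| \geq |X| > n/4$. If the largest color-3 component has size greater than $n/4$, place it alone on one side and put the remaining components on the other; since that component has fewer than $n/2$ vertices, the other side has more than $n/2$, so both sides exceed $n/4$. Otherwise every color-3 component has size at most $n/4$, and a greedy algorithm that assigns components in decreasing order of size to the currently smaller side produces a bipartition whose two parts differ by at most $n/4$ (a short induction shows the running difference is always bounded by the largest item seen), so each part has at least $3n/8 > n/4$ vertices.

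The payoff of this bipartition is that no color-3 edge can cross between $X$ and $Y$, so $G[X,Y]$ inherits a 2-coloring using only colors 1 and 2. The minimum degree hypothesis $\delta(G) \geq 7n/8$ forces each $v \in X$ to have at most $n/8 - 1$ non-neighbors in $V(G) \setminus \{v\}$, hence at most $n/8 - 1 < n/8$ non-neighbors in $Y$; moreover, the absence of color-3 edges from $v$ to $Y$ means $\deg_{G[X,Y]}(v)$ equals $|N_G(v) \cap Y|$. Therefore $\delta_{G[X,Y]}(X,Y) \geq |Y| - n/8$, and symmetrically $\delta_{G[X,Y]}(Y,X) \geq |X| - n/8$.

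Applying Theorem \ref{-n/8} to the bipartite graph $G[X,Y]$ with its 2-coloring then yields a monochromatic (in color 1 or 2) component of size at least $n/2$, contradicting our assumption. The main point to be careful about is the strict inequality $|X| > n/4$ in the hypothesis of Theorem \ref{-n/8}: this is exactly why we need every color-3 component to have size strictly less than $n/2$ (which is fine since the alternative would already produce the desired large monochromatic component), so that the first case of the bipartition construction gives a strict bound and the greedy bound $3n/8 > n/4$ in the second case is also strict.
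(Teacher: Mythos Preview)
Your proof is correct and follows essentially the same approach as the paper: reduce to Theorem~\ref{-n/8} by splitting $V(G)$ into two unions of color-3 components, each of size $>n/4$, so that the cross-graph is $2$-colored. The paper's proof is terser (it just asserts the existence of the bipartition, using the threshold $3n/4$ rather than your $n/2$, and does not spell out the greedy step), but the argument is the same.
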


\begin{proof}
If there is no monochromatic connected component of order at least $3n/4$ in say green, then there exists a bipartition $\{X,Y\}$ of $V(G)$ with $|X|, |Y|> n/4$ such that $G[X,Y]$ is colored with only red and blue.  Apply Theorem \ref{-n/8} to the 2-colored bipartite graph $G[X,Y]$ to get the desired monochromatic component.  
\end{proof}

We prove Theorem \ref{r=2} in Section \ref{two_colors}, we prove Theorem \ref{tetel} in Section \ref{r_colors}, and we prove Theorem \ref{-n/8} in Section \ref{sec:add}.  

\section{Two colors}\label{two_colors}

\begin{proof}[Proof of Theorem \ref{r=2}]
Let $G$ be an $(X,Y)$-bipartite graph with $|X|=m$, $|Y|=n$, $\delta(X,Y) > \frac{2}{3} n$, $\delta(Y,X) > \frac{2}{3} m$ and 
consider an arbitrary red/blue coloring of the edges of $G$. 

Let $H_1$ be the largest monochromatic, say blue, component.
Let $X_1=H_1\cap X$, $Y_1=H_1\cap Y$, $X_2=X\setminus H_1$, $Y_2=Y\setminus H_1$.  Let $x_i=|X_i|$ and $y_i=|Y_i|$.   

If $y_1\leq n/3$, then every pair of vertices in $X_1$ has a common red neighbor in $Y_2$ and every vertex in $X_1$ has more than $n/3$ red neighbors in $Y_2$, giving us a red component with more than $x_1+n/3\geq x_1+y_1$ vertices; so we have $y_1>n/3$.  Likewise, we have $x_1>m/3$.

\begin{claim}\label{doneif}
If there exists a monochromatic component which intersects $X$ in at least $2m/3$ vertices or $Y$ in at least $2n/3$ vertices, then there exists a monochromatic component of order at least $\frac{m+n}{2}$.  In particular, if there exists a vertex which is only incident with edges of a single color, then there exists a monochromatic component of order at least $\frac{m+n}{2}$.
\end{claim}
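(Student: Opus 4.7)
The plan is to use the assumed large monochromatic component to trap the remaining vertices inside one (even larger) component of the opposite color. Let $C$ be the monochromatic component guaranteed by the hypothesis. After possibly swapping colors and/or exchanging the roles of $X$ and $Y$, I may assume $C$ is blue and $|C \cap X| \geq 2m/3$. Write $X_1 = C\cap X$, $Y_1 = C\cap Y$, $X_2 = X\setminus X_1$, $Y_2 = Y\setminus Y_1$, so that $|X_2|\leq m/3$. If $|C| \geq (m+n)/2$ already, the claim is immediate; otherwise $|Y_1| < (m+n)/2 - 2m/3 = n/2 - m/6$, and therefore $|Y_2| > n/2 + m/6$.

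The key step will be to observe that every edge from $X_1$ to $Y_2$ must be red (a blue one would pull its $Y_2$-endpoint into $C$), and then to show via a common-neighbor argument that all of $Y_2$ sits in a single red component $R$. For each $y \in Y_2$, the minimum degree condition $\delta(Y,X) > 2m/3$ together with $|X_2| \leq m/3$ will yield strictly more than $2m/3 - |X_2|$ red neighbors in $X_1$. For any two $y, y' \in Y_2$, adding their red neighborhoods in $X_1$ produces a total strictly exceeding $4m/3 - 2|X_2|$; since both neighborhoods lie inside $X_1$ of size $m-|X_2|$, their intersection has size strictly greater than $m/3 - |X_2| \geq 0$, and in particular is nonempty, giving a common red neighbor $x$ and a red $y$--$x$--$y'$ path.

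Given this, the size bound should follow by also including in $R$ the red neighborhood of some fixed $y \in Y_2$, which supplies more than $2m/3 - |X_2| \geq m/3$ vertices of $X_1$; this will yield $|R| > |Y_2| + m/3 > (n/2 + m/6) + m/3 = (m+n)/2$, finishing the main part. For the ``in particular'' clause, if a vertex $v$ is incident only to edges of a single color, say blue, then taking WLOG $v \in X$, the more than $2n/3$ neighbors of $v$ all lie in the blue component of $v$, which then intersects $Y$ in more than $2n/3$ vertices, reducing to the main part.

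The main obstacle, though not deep, is careful bookkeeping around strictness: when $|X_2|$ saturates its upper bound $m/3$, the intersection bound $m/3 - |X_2|$ degenerates to $0$, and only the strict inequality $\delta(Y,X) > 2m/3$ from the hypothesis of Theorem~\ref{r=2} (rather than $\geq$) ensures that the intersection is actually nonempty. I will need to carry the strict ``$>$'' through every step of the estimate rather than prematurely collapsing it to ``$\geq$''.
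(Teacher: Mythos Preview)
Your proof is correct and follows essentially the same route as the paper: assume a blue component covers at least $2m/3$ of $X$, and if it is too small on the $Y$-side, use the common-red-neighbor argument in $X_1$ to assemble a large red component containing $Y_2$. The only cosmetic difference is that the paper additionally notes every $x\in X_1$ has a red neighbor in $Y_2$ (since $|Y_1|<n/2$), so all of $X_1\cup Y_2$ lies in one red component of size $>2m/3+n/2\geq (m+n)/2$, which lets it avoid your sharper bound $|Y_2|>n/2+m/6$.
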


\begin{proof}
Suppose there exists some, say blue, component $H_1'$ such that $|H_1'\cap X|\geq 2m/3$ or $|H_1'\cap Y|\geq 2n/3$.  Let $X_1'=H_1'\cap X$, $Y_1'=H_1'\cap Y$, $X_2'=X\setminus H_1'$, $Y_2'=Y\setminus H_1'$.  Let $x_i'=|X_i'|$ and $y_i'=|Y_i'|$.  So if $x_1'\geq 2m/3$, then we have the desired monochromatic component unless $y_1'<n/2$.  But now every vertex in $X_1'$ has a red neighbor in $Y_2'$ and every pair of vertices in $Y_2'$ has a common red neighbor in $X_1'$, so we have a red component of size greater than $2m/3+n/2\geq \frac{n+m}{2}$.  Likewise if $y_1'\geq 2n/3$. 

Since $\delta(X,Y)>2n/3$ and $\delta(Y,X)>2m/3$, if there exists a vertex which is only incident with edges of a single color, this implies there is a monochromatic component which intersects $X$ in at least $2m/3$ vertices or $Y$ in at least $2n/3$ vertices.
\end{proof}

By Claim \ref{doneif}, we may suppose for the remainder of the proof that every vertex is incident with at least one edge of each color.  Also note that for the rest of the proof, if we have at most two monochromatic components covering all of $V(G)$, then we are done since at least one of them has at least $\frac{m+n}{2}$ vertices.  

Since $x_1>m/3$ and $y_2>n/3$, every vertex in $Y_2$ has a red neighbor in $X_1$ and every vertex in $X_1$ has a red neighbor in $Y_2$, so there are non-trivial red components $R_1, \dots, R_k$ in $G$ which cover all of $X_1\cup Y_2$.  Likewise, since $y_1>n/3$ and $x_2>m/3$, there are non-trivial red components $R_1', \dots, R_\ell'$ in $G$ which cover all of $X_2\cup Y_1$.  Note that it is possible to have $R_i=R_j'$ for some $i\in [k]$, $j\in [\ell]$ (in fact, if there is a red edge in $[X_2, Y_2]$ or $[X_1, Y_1]$, this will necessarily be the case).

Suppose first that $\ell=1$.  Of course we must have $k\geq 2$ otherwise there would be at most two red components covering all of $G$ and we are done.  Let $Y_2'=\{v\in Y_2: N_R(v)\cap X_2=\emptyset\}$.  If $|Y_2'|\leq n/3$, then since $R_1'$ covers $X_2$, there is a red component covering $|Y|-|Y_2'|\geq 2n/3$ vertices of $Y$ and we are done by Claim \ref{doneif}.  So suppose $|Y_2'|>n/3$.  If there exists $i\in [k]$ such that $Y_2'\subseteq R_i$, then since for every $j\neq i$, every vertex in $R_j$ has a red neighbor in $R_1'$, there are at most two red components in $G$ covering all of $V(G)$ and we are done; so suppose not.  Since every pair of vertices from $Y_2'$ in different red components $R_i$ and $R_j$ have more than $m/3$ common neighbors, all of which must be in $X_2$ (they have no blue edges to $X_1$ and they are in distinct red components), all of these common neighbors are blue and thus there is a blue component $H_2$ in $[X_2, Y_2]$ covering $Y_2'$.  Furthermore, since $|Y_2'|>n/3$, every vertex in $X_2$ has a (necessarily) blue neighbor in $Y_2'$ and thus $H_2$ covers $X_2$.  Finally, since every vertex in $Y_2$ has at least one blue neighbor (by Claim \ref{doneif}) and it must be in $X_2$, this implies that $H_2$ covers $X_2\cup Y_2$ and thus we have two blue components covering all of $V(G)$ and we are done.  The proof for $k=1$ is analogous, so we have $k\geq 2$ and $\ell\geq 2$.

Suppose next that $\ell=2$. If there is a blue component $H_2$ covering $X_2$, then since every vertex in $Y_2$ has a blue neighbor (by Claim \ref{doneif}), but none in $X_1$, every vertex in $Y_2$ is also covered by $H_2$.  Thus we have two blue components covering all of $V(G)$ and we are done.  So suppose there is no blue component covering $X_2$, which allows us to choose vertices $u\in R_1'\cap X_2$ and $u'\in R_2'\cap X_2$ such that $u$ and $u'$ have no common blue neighbors.  Since $u$ and $u'$ are in different red components and are not in $H_1$, we have $N(u)\cap N(u')\subseteq Y_2$ and $N_R(u)\cap N_R(u')=\emptyset$.  This implies that every common neighbor of $u$ and $u'$ is in $Y_2$ and is either a red neighbor of $u$ or a red neighbor of $u'$; i.e. 
\[
|(N_R(u)\cup N_R(u'))\cap Y_2|\geq |N(u)\cap N(u')|>n/3.
\]
Thus every vertex in $X_1$ has a (necessarily) red neighbor in $(N_R(u)\cup N_R(u'))\cap Y_2$ which implies that there are at most two red components in $G$ covering $X\cup Y_1$.  Finally, since every vertex in $Y_2$ has a red neighbor in $X_1$, this implies that there are at most two red components in $G$ covering all of $V(G)$ and we are done.  The proof for $k= 2$ is analogous. 

Finally, suppose $k\geq 3$ and $\ell\geq 3$.  For all $i\in [k]$, choose $u_i\in R_i\cap X_1$ and $v_i\in R_i\cap Y_2$ and for all $j\in [\ell]$, choose $u_j'\in R_j'\cap X_2$ and $v_j'\in R_j'\cap Y_1$.  We have
\[
x_1+y_2\geq \sum_{i=1}^k(\deg_{X_1}(v_i)+\deg_{Y_2}(u_i))>k(2m/3-x_2+2n/3-y_1)=k(x_1-m/3+y_2-n/3) 
\]
which implies
\begin{equation}\label{x1y2}
x_1+y_2<\frac{k}{3(k-1)}(m+n),
\end{equation}
and 
\[
x_2+y_1\geq \sum_{j=1}^\ell(\deg_{Y_1}(u_j')+\deg_{X_2}(v_j'))>k(2m/3-x_1+2n/3-y_2)=k(x_2-m/3+y_1-n/3) 
\]
which implies
\begin{equation}\label{x2y1}
x_2+y_1<\frac{\ell}{3(\ell-1)}(m+n).
\end{equation}
Since $k\geq 3$ and $\ell\geq 3$, \eqref{x1y2} and \eqref{x2y1} imply 
\[m+n=x_1+y_2+x_2+y_1<(\frac{k}{3(k-1)}+\frac{\ell}{3(\ell-1)})(m+n)\leq m+n,\] a contradiction.
\end{proof}

\section{$r$ colors}\label{r_colors}

\subsection{Stability}

As noted in the introduction, Mubayi, and independently Liu, Morris, and Prince proved a density version of Theorem \ref{gy1}.

\begin{lemma}[\cite{LMP},\cite{MU}]\label{andraslemma2}
Let $0\leq \eta\leq 1$ and let $G$ be an $(X,Y)$-bipartite graph with $|X|=m$, $|Y|=n$.
If $e(G)\geq \eta mn$, then $G$ has a
double star (component) of order at least $\eta(m+n)$.
\end{lemma}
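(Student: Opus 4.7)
The plan is to produce a single edge $uv \in E(G)$ whose endpoints have large combined degree, and then read off the double star centered at this edge. A double star with centers $u \in X$ and $v \in Y$ consists of $u$, $v$, and all their neighbors, so since the partite sides are disjoint its vertex set is $N_G(u) \cup N_G(v)$ and its order is exactly $\deg_G(u) + \deg_G(v)$. Thus it suffices to exhibit some $uv \in E(G)$ with $\deg_G(u) + \deg_G(v) \geq \eta(m+n)$; the resulting double star is connected, so it lies inside a single component of $G$, giving a ``double star (component)'' of the required order.

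To produce such an edge I would average $\deg_G(u) + \deg_G(v)$ over $E(G)$. Counting the contribution of each vertex to this sum,
\[
\sum_{uv \in E(G)} \bigl(\deg_G(u) + \deg_G(v)\bigr) \;=\; \sum_{u \in X} \deg_G(u)^2 \;+\; \sum_{v \in Y} \deg_G(v)^2,
\]
since each $u \in X$ appears in exactly $\deg_G(u)$ edges and contributes $\deg_G(u)$ each time (and similarly for $Y$). Cauchy--Schwarz (equivalently, convexity of $x \mapsto x^2$) applied to each side separately, together with $\sum_{u \in X}\deg_G(u) = \sum_{v \in Y}\deg_G(v) = e(G)$, yields
\[
\sum_{u \in X}\deg_G(u)^2 \geq \frac{e(G)^2}{m}, \qquad \sum_{v \in Y}\deg_G(v)^2 \geq \frac{e(G)^2}{n}.
\]
Dividing the combined bound by $e(G)$ and using the hypothesis $e(G) \geq \eta mn$, the average edge satisfies
\[
\deg_G(u) + \deg_G(v) \;\geq\; \frac{e(G)(m+n)}{mn} \;\geq\; \eta(m+n),
\]
so some individual edge attains at least this degree sum, finishing the argument.

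There is essentially no obstacle in the approach; the only substantive step is the observation that a double star centered at an edge $uv$ has order precisely $\deg_G(u)+\deg_G(v)$, after which Cauchy--Schwarz immediately does the work. The one point worth handling carefully is that the Cauchy--Schwarz bound must be applied to the two partite sides \emph{separately} (rather than to all $m+n$ degrees at once), because $|X|$ and $|Y|$ are generally unequal and one needs the asymmetric bound $e(G)^2/m + e(G)^2/n$ in order for the factor $m+n$ to appear in the conclusion.
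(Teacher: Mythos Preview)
Your proof is correct and is exactly the standard Cauchy--Schwarz averaging argument due to Mubayi and Liu--Morris--Prince. The paper does not supply its own proof of this lemma (it is merely cited), but the same computation---summing $\deg(u)+\deg(v)$ over edges, rewriting as $\sum_X\deg^2+\sum_Y\deg^2$, and applying Cauchy--Schwarz separately on each side---appears verbatim in the paper's proof of the stability lemma that follows, so your approach matches the intended one.
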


We begin by proving a stability version of this lemma, i.e. either we have a slightly larger double star than guaranteed by Lemma \ref{andraslemma2} or we have strong structural properties in $G$, namely apart from a small number of exceptional vertices all vertices have degrees close to the average degree.

\begin{lemma}\label{stabilitylemma}
Let $r$ be an integer with $r\geq 2$ and $\delta>0$.  Let $G$ be an $(X,Y)$-bipartite graph with $|X|=m$, $|Y|=n$, and $m \leq n$, and set $\alpha=\frac{m+n}{r^2n}\delta$ and $\beta=\frac{m+n}{r^2m}\delta$.  If $e(G)\geq (1-\delta)\frac{mn}{r}$, then
one of the following two cases holds:
\begin{enumerate}
\item[(i)] $G$ has a double star (component) of order at least $\frac{m+n}{r}$.
\item[(ii)]
\begin{enumerate}
\item For all but at most $\alpha^{1/3}m$ exceptional vertices  $x\in X$ we have
\beq\label{ave1}\deg(x) > \avdeg(X,Y) - \alpha^{1/3} n,\eeq
\item and for all but at most $\beta^{1/3}n$ exceptional vertices $y\in Y$ we have
\beq\label{ave2}\deg(y) > \avdeg(Y,X) - \beta^{1/3} m.\eeq
\end{enumerate}

\end{enumerate}
\end{lemma}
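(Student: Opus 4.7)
The plan is to derive a stability statement from the Cauchy--Schwarz proof that underlies Lemma \ref{andraslemma2}, by tracking how far the degree sequence is from being constant. Suppose (i) fails, so every edge $xy \in E(G)$ satisfies $\deg(x) + \deg(y) < \frac{m+n}{r}$, since the double star centered at $xy$ has exactly $\deg(x) + \deg(y)$ vertices (bipartiteness forces $N(x) \cap N(y) = \emptyset$). Summing this bound over all edges, and using the identity $\sum_{xy \in E(G)}(\deg(x)+\deg(y)) = \sum_{x \in X}\deg(x)^2 + \sum_{y \in Y}\deg(y)^2$, one obtains
$$\sum_{x \in X}\deg(x)^2 + \sum_{y \in Y}\deg(y)^2 < e(G) \cdot \frac{m+n}{r}.$$

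The second step is to produce a matching lower bound via the variance identity. Write $\sigma_X^2 := \sum_{x \in X}(\deg(x) - \avdeg(X,Y))^2$ and $\sigma_Y^2 := \sum_{y \in Y}(\deg(y) - \avdeg(Y,X))^2$, and recall that $\avdeg(X,Y) = e(G)/m$ and $\avdeg(Y,X) = e(G)/n$. Then $\sum_{x \in X}\deg(x)^2 = e(G)^2/m + \sigma_X^2$, and analogously for $Y$. Substituting into the upper bound gives
$$\sigma_X^2 + \sigma_Y^2 < e(G)(m+n)\Big(\tfrac{1}{r} - \tfrac{e(G)}{mn}\Big),$$
which together with $e(G) \geq (1-\delta)\frac{mn}{r}$ and a routine computation yields $\sigma_X^2 + \sigma_Y^2 \leq \delta \cdot \frac{mn(m+n)}{r^2}$. (If $e(G) > mn/r$ the display above is already a contradiction, so we may assume $e(G) \leq mn/r$ and write $e(G) = (1-\delta')\frac{mn}{r}$ with $0 \leq \delta' \leq \delta$, giving the stated bound via $(1-\delta')\delta' \leq \delta$.)

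Finally, I would pass from this global variance bound to the desired exceptional-set bounds by a Chebyshev-style count. If more than $\alpha^{1/3} m$ vertices $x \in X$ violated (\ref{ave1}), each such vertex would contribute at least $(\alpha^{1/3} n)^2 = \alpha^{2/3} n^2$ to $\sigma_X^2$, so
$$\sigma_X^2 > \alpha^{1/3} m \cdot \alpha^{2/3} n^2 = \alpha \cdot m n^2 = \delta \cdot \frac{mn(m+n)}{r^2},$$
contradicting the variance bound. The analogous computation with $X$ and $Y$ interchanged handles (\ref{ave2}). No step is conceptually hard; the only real obstacle is bookkeeping, verifying that the definitions of $\alpha$ and $\beta$ are calibrated so that the exceptional-count threshold squared exactly saturates the variance budget, and these definitions in the lemma statement have been chosen for precisely that purpose.
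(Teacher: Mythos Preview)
Your proof is correct and follows essentially the same approach as the paper: both derive a lower bound on $\sum_{x}\deg(x)^2+\sum_{y}\deg(y)^2$ that grows with the number of low-degree exceptional vertices, and compare it to the upper bound $e(G)\frac{m+n}{r}$ forced by the failure of (i). The only cosmetic difference is that the paper invokes the ``defect form'' of Cauchy--Schwarz (as in Szemer\'edi's regularity paper) to produce the extra term $\Delta_X^2/k_X$, whereas you write the same quantity via the variance identity $\sum_x\deg(x)^2=e(G)^2/m+\sigma_X^2$ and then apply a Chebyshev count; the resulting inequalities and the calibration of $\alpha,\beta$ are identical.
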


\begin{proof}
Suppose $e(G)\geq (1-\delta)\frac{mn}{r}$, but $(i)$ does not hold.  So by Lemma \ref{andraslemma2} we have 
\beq\label{elek}\frac{mn}{r}> e(G) \geq (1-\delta)\frac{mn}{r},\eeq
Suppose that $k_X$ vertices $x\in X$ satisfy $\deg(x) \leq \avdeg(X,Y) -\alpha^{1/3} n$ and suppose that $k_Y$ vertices $y\in Y$ satisfy $\deg(y) \leq \avdeg(Y,X) -\beta^{1/3} n$. 
Denote the vertices in $X$ by $x_1, x_2, \ldots, x_m$, where $x_1, \ldots, x_{k_X}$ are the
exceptional vertices.  Likewise, denote the vertices in $Y$ by $y_1, y_2, \ldots, y_n$, where $y_1, \ldots, y_{k_Y}$ are the
exceptional vertices.

We will use the
``defect form'' of the Cauchy-Schwarz inequality (as in \cite{Sz} or in \cite{KSSz3}): if
$$\sum_{i=1}^{k_X} \deg(x_i) = \frac{k_X}{m}\sum_{i=1}^m \deg(x_i)
+\Delta_X = k_X \avdeg(X,Y) + \Delta_X  \qquad(k_X\leq m),$$
then
$$\sum_{i=1}^m \deg(x_i)^2\geq\frac1m
\left(\sum_{i=1}^m \deg(x_i)\right)^2+\frac{\Delta_X^2m}{k_X(m-k_X)}\geq \frac1m \left(\sum_{i=1}^m \deg(x_i)\right)^2+\frac{\Delta_X^2}{k_X}.$$
Analogously, we have 
$$\sum_{i=1}^{n} \deg(y_i)^2\geq \frac1n \left(\sum_{i=1}^n \deg(y_i)\right)^2+\frac{\Delta_Y^2}{k_Y}.$$
Note that 
\beq \label{DXDY}
|\Delta_X| \geq k_X \alpha^{1/3} n  ~\text{ and }~ |\Delta_Y| \geq k_Y \beta^{1/3} m.
\eeq

Then the average order of a double star in $G$ can be estimated as follows
\begin{align*}
\frac{1}{e(G)} \sum_{xy\in E(G)} (\deg(x) + \deg(y)) 
&= \frac{1}{e(G)} \left( \sum_{i=1}^m \deg(x_i)^2 + \sum_{j=1}^n \deg(y_j)^2 \right)\\
&\geq \frac{1}{e(G)} \left( \frac{e(G)^2}{m} + \frac{\Delta_X^2}{k_X} + \frac{e(G)^2}{n} + \frac{\Delta_Y^2}{k_Y}\right)\\
&= e(G) \frac{m+n}{mn} + \frac{\Delta_X^2}{k_X e(G)}+\frac{\Delta_Y^2}{k_Y e(G)}\\
&\geq \frac{1-\delta}{r}(m+n) + \frac{r k_X \alpha^{2/3}n}{m}+\frac{r k_Y \beta^{2/3} m}{n},
\end{align*}
where the last inequality holds by \eqref{elek} and \eqref{DXDY}.
Now if $k_X\geq \alpha^{1/3}m$ or $k_Y\geq \beta^{1/3}n$, then we have 
$$\frac{1-\delta}{r}(m+n) + \frac{r k_X \alpha^{2/3}n}{m}+\frac{r k_Y \beta^{2/3} m}{n}\geq \frac{m+n}{r},$$
a contradiction with the fact that $(i)$ does not hold.  So 
$k_X< \alpha^{1/3}m$ and $k_Y< \beta^{1/3}n$ and thus (ii) holds.
\end{proof}

From Lemma \ref{stabilitylemma} we can prove our main lemma.  Note that this lemma is very similar to the main lemma (Lemma 2.2) in \cite{GYS2}, but the proof is vastly simplified here and gives a slightly better degree estimate.

\begin{lemma}\label{main}
Assume that $\delta \leq \min\{\frac{n}{64 r^4(m+n)}, \frac{m}{64r(m+n)}\}$.  Under the conditions of Lemma \ref{stabilitylemma}, if $G$ does not contain a component of order at least $\frac{m+n}{r}$, then the following holds:
\begin{enumerate}
\item[($ii'$)] There are $r$ components $C_1, C_2, \dots, C_r$ such that for each $1\leq i \leq r$ we have the following properties:
\begin{enumerate}
\item $|C_i| < \frac{m+n}{r}$,
\item $|C_i\cap X| \geq \avdeg(Y,X) - \beta^{1/3} m$,
\item $|C_i\cap Y| \geq \avdeg(X,Y) - \alpha^{1/3} n$,
\item $|X\setminus \cup_{i=1}^r C_i| \leq \alpha^{1/3} m$,
\item $|Y\setminus \cup_{i=1}^r C_i| \leq \beta^{1/3} n$.
\end{enumerate}

\end{enumerate}
\end{lemma}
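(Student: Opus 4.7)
The plan is to invoke Lemma \ref{stabilitylemma} with the edge-count hypothesis $e(G) \ge (1-\delta)mn/r$; since we are assuming no component has order at least $(m+n)/r$, conclusion (i) of that lemma fails and (ii) must hold. Write $X_0 \subseteq X$ and $Y_0 \subseteq Y$ for the exceptional sets, so $|X_0| \le \alpha^{1/3} m$, $|Y_0| \le \beta^{1/3} n$, and every non-exceptional vertex has degree exceeding the corresponding average degree minus $\alpha^{1/3} n$ or $\beta^{1/3} m$. The hypothesis on $\delta$ yields the clean estimates $\alpha^{1/3} \le 1/(4r^2)$ and $\beta^{1/3} \le 1/(4r)$, which I would use throughout.

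The key object is the set $\mathcal{C}$ of components of $G$ that meet $X \setminus X_0$. Since every $x \in X \setminus X_0$ has more than $\avdeg(X,Y) - \alpha^{1/3} n \ge (1-\delta) n/r - \alpha^{1/3} n$ neighbours in its own component, summing over $\mathcal{C}$ and bounding by $n$ forces $|\mathcal{C}| \le r$. Next I would show that $\mathcal{C}$ is also exactly the collection of components meeting $Y \setminus Y_0$: a non-exceptional $x \in X$ cannot have all of its neighbours in $Y_0$ because the clean estimates give $\alpha^{1/3} + \beta^{1/3} < (1-\delta)/r$, and symmetrically for non-exceptional $y \in Y$. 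Once this agreement is established, properties (b) and (c) follow by picking a non-exceptional vertex from each side inside every $C_i \in \mathcal{C}$, while properties (d) and (e) follow from $X \setminus \bigcup \mathcal{C} \subseteq X_0$ and $Y \setminus \bigcup \mathcal{C} \subseteq Y_0$. Property (a) is just the hypothesis that no component has order at least $(m+n)/r$.

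To finish I need $|\mathcal{C}| = r$ exactly, which I would obtain by a packing bound from below: $\bigl|\bigcup \mathcal{C}\bigr| \ge m(1-\alpha^{1/3}) + n(1-\beta^{1/3})$ while each member of $\mathcal{C}$ has fewer than $(m+n)/r$ vertices, so $|\mathcal{C}|(m+n)/r > (m+n) - (m\alpha^{1/3} + n\beta^{1/3})$. Showing $m\alpha^{1/3} + n\beta^{1/3} \le (m+n)/(4r)$ then forces $|\mathcal{C}| > r - 1/4$, hence $|\mathcal{C}| = r$. The main obstacle is this last calculation, and it is the reason for the somewhat baroque hypothesis $\delta \le \min\{n/(64r^4(m+n)),\, m/(64r(m+n))\}$: the two bounds are needed to keep $\alpha$ and $\beta$ small respectively, and the estimate $m\alpha^{1/3} \le m/(4r)$ crucially uses $m \le n$ through the inequality $m^{4/3}/n^{1/3} \le m$. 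All remaining parts of the argument are routine counting from the degree control provided by Lemma \ref{stabilitylemma}.
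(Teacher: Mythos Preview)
Your proposal is correct and follows essentially the same route as the paper: invoke case~(ii) of Lemma~\ref{stabilitylemma}, use the degree lower bound on non-exceptional vertices in $X$ to cap the number of covering components at $r$, argue that these same components also cover the non-exceptional vertices in $Y$ (and vice versa), and then use a pigeonhole/packing count to rule out fewer than $r$ components. One small remark: your aside that $m\alpha^{1/3}\le m/(4r)$ ``crucially uses $m\le n$'' is unnecessary once you have already derived $\alpha^{1/3}\le 1/(4r^2)$ from the first term of the $\min$, since then $m\alpha^{1/3}\le m/(4r^2)\le m/(4r)$ directly.
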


\begin{proof} Suppose that $G$ does not contain a component of order at least $\frac{m+n}{r}$.  So $(ii)$ holds in Lemma \ref{stabilitylemma}. We have to show that in this case $(ii')$ holds as well.  
Let $X'$ and $Y'$ be the non-exceptional vertices of $X$ and $Y$ respectively.  
We first show that there are at most $r$ components covering $X'$ all of which contain at least one vertex from $Y'$, and every vertex in $Y'$ has a neighbor in $X'$.  This implies that there are at most $r$ components covering $X'\cup Y'$ and thus all of $(a)-(e)$ holds.

Note that $\delta \leq \min\{\frac{n}{64 r^4(m+n)}, \frac{m}{64r(m+n)}\}$ implies 
\beq\label{alphabeta}
\alpha=\frac{m+n}{r^2n}\delta\leq \frac{1}{64r^6} ~~\text{ and }~~ \beta=\frac{m+n}{r^2m}\delta\leq \frac{1}{64r^3}.
\eeq

Suppose there are $r+1$ vertices from $X'$, every pair of which are in a different component.  Since $\delta(X',Y)\geq (\frac{1-\delta}{r}-\alpha^{1/3})n$, we have
\beq \label{compcover}
n\geq (r+1)(\frac{1-\delta}{r} - \alpha^{1/3})n\geq (r+1)(\frac{1}{r+1}+\frac{1}{2r(r+1)} - \alpha^{1/3})n.
\eeq
Where the second inequality holds since $\delta\leq \frac{1}{2(r+1)}$.  Furthermore, by \eqref{alphabeta} we have $\alpha^{1/3}\leq \frac{1}{4r^2}< \frac{1}{2r(r+1)}$ and thus $(r+1)(\frac{1}{r+1}+\frac{1}{2r(r+1) }- \alpha^{1/3})n>n$ contradicting \eqref{compcover}.

Each such component will contain a vertex of $Y'$ provided
$$\avdeg(X,Y) - \alpha^{1/3} n \geq \frac{1-\delta}{r} n - \alpha^{1/3} n > \beta^{1/3}n,$$
which in turn is true if
$$2 \beta^{1/3} \leq \frac{1}{2r},$$
which is true by \eqref{alphabeta}.

Now each vertex in $Y'$ will have a neighbor in $X'$ provided
$$\avdeg(Y,X) - \beta^{1/3} m \geq \frac{1-\delta}{r} m - \beta^{1/3}m > \alpha^{1/3}m,$$
which in turn is true if
$$2 \beta^{1/3} \leq \frac{1}{2r},$$
which is true by \eqref{alphabeta}.

Finally, note that if there were fewer than $r$ components, then $(d)-(e)$ would imply that there is a component of size at least 
$$\frac{(1-\alpha^{1/3})m+(1-\beta^{1/3})n}{r-1}\geq \frac{(1-\frac{1}{r})(m+n)}{r-1}=\frac{m+n}{r},$$ contradicting $(a)$.  So there are exactly $r$ components.
\end{proof}

\subsection{Large monochromatic component}

\begin{proof}[Proof of Theorem \ref{tetel}]

Let $$\gamma=\frac{(m/n)^3}{128r^5}$$ and let $G$ be an $(X,Y)$-bipartite graph with $|X|=m$, $|Y|=n$, $m\leq n$, and $\delta(X,Y) > \left( 1 - \gamma\right) n$ and $\delta(Y,X) > \left( 1 -\gamma\right) m$.  Consider an $r$-edge coloring of $G$ and suppose, for contradiction, that there is no monochromatic component of order at least $\frac{m+n}{r}$.  

Using the minimum degree condition, the number of edges in $G$ is at least $(1-\gamma)mn$.
Denote the monochromatic bipartite graphs induced by each of the $r$ colors by $G_1, \ldots, G_r$.
Then for each $1\leq i \leq r$ we have $e(G_i)<\frac{mn}{r}$, since
otherwise, by applying Lemma \ref{andraslemma2} to $G_i$, we have a monochromatic component of order at least $\frac{m+n}{r}$ in color $i$.  For all $i\in [r]$, define $\gamma_i$ by $e(G_i)=(1-\gamma_i)\frac{mn}{r}$.  So for all $1\leq i \leq r$ we have
\beq\label{G_i}
e(G_i)=(1-\gamma_i)\frac{mn}{r}\geq (1-r\gamma ) \frac{mn}{r}.\eeq
Indeed, otherwise the number of edges in $G$ would be less than
$$(1-r\gamma ) \frac{mn}{r} + (r-1) \frac{mn}{r} = (1-\gamma) m n,$$
a contradiction.

Using (\ref{G_i}), we can apply Lemma \ref{main} for each $G_i, 1\leq i \leq r$ with $\delta_i = \gamma_i$, $\alpha_i=\frac{m+n}{r^2n}\delta_i$, and $\beta_i=\frac{m+n}{r^2m}\delta_i$.  Since $\delta_i\leq r\gamma$ by \eqref{G_i} and since $m/n\leq 1$, we have 
$$
\delta_i\leq r\gamma=\frac{(m/n)^3}{128r^4}\leq \min\{\frac{n}{64 r^4(m+n)}, \frac{m}{64r(m+n)}\},
$$
and thus the conditions of Lemma \ref{main} are satisfied.  Furthermore, we have 
\beq\label{alpha}
\alpha_i=\frac{m+n}{r^2n}\delta_i\leq \frac{m+n}{r^2n} r\gamma=\frac{m+n}{n}\frac{(m/n)^3}{128r^6}\leq \frac{(m/n)^3}{8r^6}, 
\eeq
and
\beq\label{beta}
\beta_i=\frac{m+n}{r^2m}\delta_i\leq \frac{m+n}{r^2m} r\gamma=\frac{m+n}{m}\frac{(m/n)^3}{128r^6}\leq \frac{(m/n)^2}{8r^6}\leq \frac{1}{8r^6}.
\eeq
Also note that since $\gamma_i^2\leq r^2\gamma^2=\frac{(m/n)^6}{128^2r^8}\leq \frac{m+n}{r^2m}$ we have
\beq\label{gba}
\gamma_i\leq \left(\frac{m+n}{r^2m}\gamma_i\right)^{1/3}=\beta_i^{1/3}\leq \alpha_i^{1/3}.
\eeq

Since we cannot have $(i)$ in Lemma \ref{main},
we must have the $r$ components, $C^i_1, \dots, C^i_r$, described in $(ii')$ of Lemma \ref{main} for each $G_i$, call these the {\em main components}. Consider the remaining set of vertices, call them $Z_i$, not covered by the union of these main components. By $(ii')(a)$, $Z_i$ is non-empty and by $(ii')(d)$ and $(ii')(e)$,  $|Z_i\cap X|\leq \alpha_i^{1/3}m$ and $|Z_i\cap Y|\leq \beta_i^{1/3}n$.
Furthermore, for all $i, j\in[r]$, note that $(ii')(a), (b)$ and $(c)$ imply that $|C^i_j\cap X|\geq \frac{1-\gamma_i}{r}m-\beta_i^{1/3}m$ and $|C^i_j\cap Y|\geq \frac{1-\gamma_i}{r}n-\alpha_i^{1/3}n$ and consequently,
\beq\label{X}|C^i_j\cap X| =|C_j^i|-|C_j^i\cap Y|\leq \frac{m+n}{r}-\left(\frac{1-\gamma_i}{r}-\alpha_i^{1/3}\right)n= \frac{m}{r}+(\frac{\gamma_i}{r}+\alpha_i^{1/3})n, \eeq
and
\beq\label{Y} |C^i_j\cap Y| =|C_j^i|-|C_j^i\cap X| \leq \frac{m+n}{r}-\left(\frac{1-\gamma_i}{r}-\beta_i^{1/3}\right)m= \frac{n}{r}+(\frac{\gamma_i}{r}+\beta_i^{1/3})m.\eeq

Without loss of generality suppose $G_1$ is the majority color class; that is, $e(G_1)\geq e(G)/r\geq (1-\gamma)\frac{mn}{r}$ and say $G_1$ is red.  So $\gamma_1\leq \gamma$.  We have that $Z_1\neq \emptyset$, so first suppose $x\in X\cap Z_1$.  So the red degree of $x$ is bounded by $|Z_1\cap Y|\leq \beta_1^{1/3}n$.  So $x$ has, say blue degree at least $\frac{1-\gamma-\beta_1^{1/3}}{r-1}n$.  Suppose $G_2$ is blue and note that by \eqref{G_i}, we have $\gamma_1\leq \gamma_2\leq r\gamma$.  We have
\begin{align*}
\frac{1-\gamma_1-\beta_1^{1/3}}{r-1}n \geq \frac{1-2\beta_1^{1/3}}{r-1}n \geq \frac{1-1/r^2}{r-1}n=\frac{n}{r}+\frac{n}{r^2}> \frac{n}{r}+(\frac{\gamma_2}{r}+\beta_2^{1/3})m,
\end{align*}
where the last inequality holds since \eqref{beta} and \eqref{gba} give
$$\frac{\gamma_2}{r}+\beta_2^{1/3}< 2\beta_2^{1/3}\leq \frac{1}{r^2}.$$
So this implies that $x$ is a non-exceptional blue vertex and contained in a main component of $G_2$, but then \eqref{Y} is violated.  

Finally, suppose the main red components cover all of $X$ (i.e. $Z_1\cap X=\emptyset$) and let $y\in Z_1\cap Y$.  So $y$ has no red neighbors, and thus has say blue degree at least $\frac{1-\gamma}{r-1}m$.  As before, clearly $y$ is a non-exceptional blue vertex and thus contained in a main blue component.  But now \eqref{X} gives the following contradiction,  
\begin{align*}
\frac{m}{r}+(\frac{\gamma_2}{r}+\alpha_2^{1/3})n\geq \frac{1-\gamma}{r-1}m \geq  \frac{1-1/r^2}{r-1}m&=\frac{m}{r}+\frac{m}{r^2}> \frac{m}{r}+(\frac{\gamma_2}{r}+\alpha_2^{1/3})n,
\end{align*}
where the last inequality holds since \eqref{alpha} and \eqref{gba} give
\[\frac{\gamma_2}{r}+\alpha_2^{1/3}< 2\alpha_2^{1/3}\leq \frac{m/n}{r^2}.\qedhere\]
\end{proof}

\section{Additive minimum degree}\label{sec:add}

\begin{proof}[Proof of Theorem \ref{-n/8}]
Let $G$ be an $X,Y$-bipartite graph on $n$ vertices with $|Y|\geq |X|> n/4$, $\delta(X,Y)\geq |Y|-n/8$, and $\delta(Y,X)\geq |X|-n/8$.  Consider a 2-edge-coloring of $G$.  

First note that since $|Y|\geq n/2$, we have 
\[e(G)\geq |X|(|Y|-n/8)\geq |X|(|Y|-|Y|/4)=\frac{3}{4}|X||Y|.\]
So by Lemma \ref{andraslemma2}, a largest monochromatic, say blue, component $H_1$ satisfies $|H_1|\geq 3n/8$.  Let $X_1=H_1\cap X$, $Y_1=H_1\cap Y$, $X_2=X\setminus H_1$, $Y_2=Y\setminus H_1$.  Let $x_i=|X_i|$ and $y_i=|Y_i|$.   

We begin with the following claim.

\begin{claim}
If $x_1\geq |X|/2$, then $y_1\geq |Y|/2$, and if $y_1\geq |Y|/2$, then $x_1\geq |X|/2$.
\end{claim}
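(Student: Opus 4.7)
The plan is to prove both implications of the claim by contradiction, in each case exhibiting a monochromatic red component strictly larger than $H_1$, contradicting the fact that $H_1$ was chosen to be the largest monochromatic component.

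For the forward direction, assume $x_1 \geq |X|/2$ but $y_1 < |Y|/2$, so $y_2 > |Y|/2 \geq n/4$. Since $X_1 \subseteq H_1$ and $Y_2 \cap H_1 = \emptyset$, every edge of $[X_1, Y_2]$ must be red. The argument then proceeds in three steps. First, using $\delta(X,Y) \geq |Y| - n/8$ and $\delta(Y,X) \geq |X| - n/8$, every $x \in X_1$ has at least $y_2 - n/8$ red neighbors in $Y_2$, and every $y \in Y_2$ has at least $x_1 - n/8$ red neighbors in $X_1$; both counts are positive because $y_2 > n/4$ and $x_1 \geq |X|/2 > n/8$. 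Second, by a standard common-neighbor count, any two $x, x' \in X_1$ share at least $|Y| - n/4$ common neighbors in $Y$, of which at most $y_1$ lie in $Y_1$, leaving at least $y_2 - n/4$ common neighbors in $Y_2$; symmetrically, any two $y, y' \in Y_2$ share at least $x_1 - n/4$ common neighbors in $X_1$. These two lower bounds sum to $(x_1 + y_2) - n/2 > |X|/2 + |Y|/2 - n/2 = 0$, so at least one of them is strictly positive. Third, that positivity places all of $X_1$ or all of $Y_2$ into a single red component $R$, and the first step sweeps in the other side. Hence $|R| \geq x_1 + y_2 > x_1 + y_1 = |H_1|$, the desired contradiction.

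For the reverse direction, assume $y_1 \geq |Y|/2$ but $x_1 < |X|/2$, so $x_2 > |X|/2$. The same template applied to the pair $(X_2, Y_1)$, using that every edge of $[X_2, Y_1]$ is red, produces a red component $R' \supseteq X_2 \cup Y_1$ with $|R'| \geq x_2 + y_1 > x_1 + y_1 = |H_1|$, again contradicting maximality.

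The delicate point is that none of the individual common-neighbor counts ($y_2 - n/4$, $x_1 - n/4$, etc.) is guaranteed to be positive from the minimum-degree hypotheses alone, since $|X|$ may be only just above $n/4$. The trick is to use the two bounds additively: although neither is individually positive, their sum is strictly positive because $x_1 \geq |X|/2$ together with the contradiction hypothesis $y_2 > |Y|/2$ force $x_1 + y_2 > n/2$. This additive use of the hypotheses, combined with the fact that whichever side one connects via common red neighbors, the other side is automatically pulled in by the first step, is precisely what makes the argument close in the borderline regime.
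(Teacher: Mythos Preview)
Your proof is correct and follows the same basic contradiction strategy as the paper: in each direction, exhibit a red component on $X_1\cup Y_2$ (respectively $X_2\cup Y_1$) that is strictly larger than $H_1$. The difference is in how you handle the common-neighbor step. The paper argues directly that $y_2-n/4>0$ in the forward direction (since $y_2>|Y|/2\geq n/4$), but in the reverse direction it cannot conclude $y_1-n/4>0$ when $|Y|=n/2$, so it splits into the two cases $|Y|/2>n/4$ and $|Y|=|X|=n/2$, switching roles of $X_2$ and $Y_1$ in the latter. Your additive observation that $(x_1+y_2)-n/2>0$ (resp.\ $(x_2+y_1)-n/2>0$) forces at least one of the two common-neighbor counts to be positive, and then the individual-degree step (which is always positive since both parts exceed $n/8$) sweeps in the other side; this absorbs the paper's case split into a single symmetric argument. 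The net effect is a cleaner, more uniform proof of the same claim.
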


\begin{proof}
First suppose $x_1\geq |X|/2>n/8$.  Either $y_1\geq |Y|/2$ and we are done, or else $y_1<|Y|/2$ and every pair of vertices in $X_1$ has a common red neighbor in $Y_2$ and every vertex in $Y_2$ has a red neighbor in $X_1$ and thus we have a red component which is larger than $H_1$, a contradiction. 

Now suppose $y_1\geq |Y|/2$.  Either $x_1\geq |X|/2$ and we are done, or else $x_1<|X|/2$.  Now if $|Y|/2>n/4$, then every pair of vertices in $X_2$ has a common red neighbor in $Y_1$ and since $x_2>|X|/2>n/8$, every vertex in $Y_1$ has a red neighbor in $X_2$ and thus we have a red component which is larger than $H_1$.  So suppose $|Y|/2=n/4$, i.e. $|Y|=|X|=n/2$.  In this case, $x_2>|X|/2\geq n/4$ so every pair of vertices in $Y_1$ has a common red neighbor in $X_2$ and every vertex in $X_2$ has a red neighbor in $Y_1$, and thus we have a red component which is larger than $H_1$, a contradiction.
\end{proof}

By the Claim, we may assume that we are done unless $x_1<|X|/2$ and $y_1<|Y|/2$. In particular, this implies that every pair of vertices in $X_1$ has a common red neighbor in $Y_2$, so there is a red component $H_2$ covering $X_1$ and at least $y_2-n/8$ vertices of $Y_2$.  

If $x_1>n/8$, then every vertex in $Y_2$ has a red neighbor in $X_1$ and there is a red component larger than $H_1$, a contradiction.  

So suppose $x_1\leq n/8$.  This implies $x_2=|X|-x_1>n/8$ and $n/8+y_1\geq x_1+y_1=|H_1|\geq 3n/8$, which implies $y_1\geq n/4$.  Since $x_2>n/8$, every vertex in $Y_1$ has a red neighbor in $X_2$.  If $y_1>n/4$, then every pair of vertices in $X_2$ has a common red neighbor in $Y_1$ and thus we have a red component which is larger than $H_1$, a contradiction.  So suppose $y_1=n/4$ and further suppose that every vertex in $X_2$ has exactly $n/8$ neighbors in $Y_1$ (otherwise we would be done as in the previous sentence).  This implies from the degree condition that $[X_2, Y_2]$ is a complete bipartite graph.  If $x_2>n/4$, then every pair of vertices in $Y_1$ has a common red neighbor in $X_2$ and every vertex in $X_2$ has a red neighbor in $Y_1$, so we have a red component which is larger than $H_1$, a contradiction.  So suppose $x_2\leq n/4$.  This implies that $|Y|=n-x_1-x_2\geq 5n/8$ and since $y_1=n/4$, we have $y_2\geq 3n/8$. Recall that the red component $H_2$ covers $X_1$ and at least $y_2-n/8\geq n/4$ vertices of $Y_2$.  So if any vertex in $Y_2\cap H_2$ had a red neighbor in $X_2$, then since every vertex in $X_2$ has exactly $n/8$ red neighbors in $Y_1$, we have a red component on at least $x_1+1+y_2-n/8+n/8>x_1+y_1=|H_1|$ vertices, a contradiction.  So every vertex in $Y_2\cap H_2$ only has blue neighbors in $X_2$ and since $[X_2, Y_2]$ is a complete bipartite graph, this implies that we have a blue component on 
\[x_2+y_2-n/8\geq x_2+n/4>x_1+n/4=x_1+y_1=|H_1|\] vertices, a contradiction.  
\end{proof}

We close with the following problem and note that any value of $\alpha$ greater than $1/8$ would improve the bound given in Corollary \ref{7/8}.

\begin{problem}\label{-n/6}
Determine the largest value of $\alpha$ so that the following is true.  Let $G$ be an $X,Y$-bipartite graph on $n$ vertices with $|Y|\geq |X|> 2\alpha n$.  If $\delta(X,Y)\geq |Y|-\alpha n$ and $\delta(Y,X)\geq |X|-\alpha n$, then in every 2-coloring of the edges of $G$, there exists a monochromatic component on at least $n/2$ vertices.
\end{problem}

\noindent{\bf Acknowledgment. } The authors are grateful to Andr\'as Gy\'arf\'as for helpful conversations.  We also thank Thanaporn Sumalroj for pointing out a typo in the first version of this paper and we thank two anonymous referees for their helpful comments.

\end{document}